\newcommand{\R}{{\mathbb R}}
\newcommand{\Z}{{\mathbb Z}}
\newcommand{\Q}{{\mathbb Q}}
\newtheorem{theorem}{Theorem}
\newtheorem{definition}{Definition}
\newtheorem{remark}{Remark}
\begin{document}
\title{On the Rationality of the Spectrum}

\author[Bose]{Debashish Bose}
\address{Debashish Bose, Department of Mathematics Shiv Nadar University, India}
\author[Madan]{Shobha Madan}
\address{Shobha Madan, Department of Mathematics, I.I.T. Kanpur, India}

\email{debashish.bose@snu.edu.in, madan@iitk.ac.in}

\subjclass[2000]{Primary:42C15; Secondary:42A99, 11B37, 11D61}
\keywords{Spectral sets, spectrum, Fuglede's
conjecture, zeros of exponential polynomials, recurrence sequences.}

\begin{abstract}

Let $\Omega \subset \R$ be a compact set with measure $1$. If there exists a subset  $\Lambda \subset \R$ such that the set of exponential functions $E_{\Lambda}:=\{e_\lambda(x) = e^{2\pi i \lambda x}|_\Omega :\lambda \in \Lambda\}$ is an orthonormal basis for $L^2(\Omega)$, then $\Lambda$ is called  a spectrum for the set $\Omega$. A set $\Omega$ is said to tile $\R$ if there exists a set $\mathcal T$ such that $\Omega + \mathcal T = \R$. A conjecture of Fuglede suggests that Spectra and Tiling  sets are related. Lagarias and Wang \cite {LW1} proved that Tiling sets are always periodic and are rational. That any spectrum is also a periodic set was proved in \cite {BM1}, \cite {IK}. In this paper, we give some partial results to support the rationality of the spectrum.
\end{abstract}
\maketitle

\section{Introduction}
In this paper we explore the rationality of the spectrum in $\R$. 

Let $\Omega \subset \R^d$ be a (compact) set with Lebesgue measure
$|\Omega| = 1$.

\begin{definition}
$\Omega$ is said to be a {\bf $spectral$ $set$} if there exists a
subset  $\Lambda \subset \R^d$ such that the set of exponential
functions $E_{\Lambda}:=\{e_\lambda(x) = e^{2\pi i \lambda .x}|_\Omega :\lambda \in \Lambda\}$ is an orthonormal basis for the Hilbert space $L^2(\Omega)$. The set
$\Lambda$ is said to be a {\bf $spectrum$} for $\Omega$, and the
pair $(\Omega,\Lambda)$ is called a {\bf $spectral$ $pair$}.
\end{definition}

It is easy to see that for a spectral set $\Omega$, the spectrum need not be unique, and conversely given a spectrum $\Lambda$, there may be many sets $\Omega$ such that $(\Omega, \Lambda )$ is a spectral pair.

Interest in the spectrum arose from a conjecture due to Fuglede relating spectral and tiling properties of  sets. $\Omega$ is said to tile $\R^d$ if there exists a subset $ \mathcal T \subset \R^d$ such that $\Omega + \mathcal T$ is a partition (a.e.) of $\R^d$. $(\Omega, \mathcal T)$ is called a tiling pair and $\mathcal T$ is called a tiling set.

\medskip
\noindent {\bf Fuglede's Conjecture}\cite {Fug}. A set $\Omega \subset \R^d$ is a spectral set if and only if $\Omega$ tiles $\R^d$ by translations.

\medskip
The conjecture suggests that there could be a strong relationship between Spectra and Tiling sets for a given $\Omega$.

Fuglede's conjecture is known to be false in dimensions $d \geq 3$ \cite {T} \cite {M} \cite {KM1}. However interest in this conjecture is still alive. For $d=2$  it has been proved for convex planar domains  for $d=2$ \cite {IKT2} and for $d = 3$, Fuglede's conjecture for Convex polytopes in $\R^3$ has been proved very recently by Greenfeld and Lev \cite{LevG} .

 We will restrict to dimension $d=1$. In this case Fuglede's conjecture is known to hold when $\Omega $ is the union of two intervals \cite {L1} and for the case of three intervals the authors proved that Tiling implies Spectral, and except for one situation, Spectral implies tiling too, \cite {BCKM}, \cite {BM2} . In Lagarias and Wang  \cite {LW1} studied the structure of tiling sets $\mathcal T$, and proved that if $(\Omega, \mathcal T)$ is a tiling pair for some $\Omega$, then the tiling set $\mathcal T$ is periodic with an integer period and is rational, i.e. $\mathcal T$ is of the form
$$\mathcal T = \cup_{j=0}^{n-1} (t_j + n\Z)$$
with $t_0 = 0$ and $t_j \in \Q, \, j= 0. 1, ..., n-1$.

Our aim is to study the structure of spectra  $\Lambda$ for spectral pairs $ (\Omega, \Lambda)$. In \cite {BM1}, the authors proved that if  $\Omega$ is the union of finitely many intervals and $ (\Omega, \Lambda)$ is a spectral pair, then $\Lambda$ is periodic with an integer period. In \cite {IK} this result was then proved for any compact set $\Omega$. 

Let $(\Omega, \Lambda)$ be a spectral pair. Since any translate of $\Lambda$ is again a spectrum for $\Omega$, we may assume that $\Lambda$ is of the form
$$\Lambda = \cup_{j=0}^{d-1} (\lambda_j + d\Z) = \Gamma + d\Z,$$
with $\lambda_0=0.$ 
Further, by the structure theorem proved in \cite {BM1} we know that $\Lambda$ is also  a spectrum for a set $\Omega_1$ which is a union
of $d$ equal intervals, whose end points lie on the lattice $\Z/d$;
 i.e.,
$$\Omega_1 = \cup_0^{d-1} [a_j/d, {a_j + 1}/d) $$
$ j = 0,1,...,d-1$, with $a_0 =0$.
 Thus to resolve  the question of rationality of a spectrum, it is enough to assume that the associated spectral  set $\Omega $ is of the above form (such sets are called {\it clusters}). After rescaling, we write
$$\Omega = A + [0,1]$$
with $ A \subset \Z_+, \, 0 \in A,\,\,|A| =d$. Then $\Lambda = \cup_{j=1}^{d-1} (\lambda_j/d + Z) = \Gamma + Z$, and $E_\Lambda  = \{ \frac{1}{\sqrt{d}} e_\lambda(x), \,\, \lambda \in \Lambda\}$ is a complete orthonormal set.

All known spectra of sets in $\R$ are rational;
however it is not known whether this must always be so. In \cite {DL} it is shown that if Fuglede's conjecture is true  in one dimension, then every spectral set of Lebesgue measure $1$ has a rational spectrum.

The following result due to Laba \cite{L2} is the only result we are aware of in the literature, which  addressed the problem of rationality of spectra for clusters:

\begin{theorem}[Laba] Suppose that $\Omega = A + [0,1],\,\, A\subset \Z_+,\,\, |A| = d$ is a spectral set. If $\Omega \subset [0, M]$, where $M  < \frac{5d}{2}$, then all spectra associated to $\Omega$ are rational.
\end{theorem}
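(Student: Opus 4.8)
The plan is to translate the spectral condition into a statement about the roots of the mask polynomial $P(z)=\sum_{a\in A}z^{a}$ and then to rule out non-cyclotomic roots by a degree count. Writing $\chi_\Omega=\sum_{a\in A}\chi_{[a,a+1]}$, the Fourier transform factors as $\widehat{\chi_\Omega}(\xi)=\frac{1-e^{-2\pi i\xi}}{2\pi i\xi}\,P(e^{-2\pi i\xi})$, whose real zeros are the nonzero integers together with the set $\{\xi:e^{-2\pi i\xi}\ \text{is a root of}\ P\ \text{on}\ \T\}$. Since $\langle e_\lambda,e_\mu\rangle_{L^2(\Omega)}=\widehat{\chi_\Omega}(\mu-\lambda)$ and $\Lambda=\Gamma+\Z$ with $\Gamma=\{\lambda_0=0,\dots,\lambda_{d-1}\}$, setting $z_j=e^{2\pi i\lambda_j}$ one checks that differences within a single coset are killed by the interval factor, so that $(\Omega,\Lambda)$ is a spectral pair if and only if $P(z_j\bar z_k)=0$ for all $j\neq k$; completeness is then automatic, since the $d$ mutually orthogonal nonzero vectors $(z_j^{a})_{a\in A}\in\C^{d}$ must span. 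Thus it suffices to prove that every ``used'' root $z_j\bar z_k$ is a root of unity, for then all $\lambda_j-\lambda_k$ are rational and, since $\lambda_0=0$, the whole spectrum lies in $\Q$.

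Next I would exploit $M<5d/2$. The number of roots of $P$ on $\T$ is at most $\deg P=\max A\le M-1<\tfrac{5d}{2}-1$, so the difference set $(\Gamma-\Gamma)\bmod 1$ has fewer than $5d/2$ elements; that is, $\Gamma\subset\R/\Z$ has doubling constant strictly below $5/2<3$. A Freiman-type structure theorem then forces $\Gamma$ either into a coset of a finite subgroup $\tfrac1N\Z/\Z$ --- in which case all differences already lie in $\tfrac1N\Z$ and we are done --- or into a genuine arithmetic progression $\{c+n_j\beta\bmod 1\}$ with the $n_j\in\Z$ distinct. In the latter case, translating so that $c=0$, the used roots are exactly the powers $w^{m}$ with $w=e^{2\pi i\beta}$ and $m$ ranging over the symmetric difference set $\Delta=\{n_j-n_k\}$; since a set of $d$ distinct integers has at least $d-1$ distinct positive differences, $|\Delta|\ge 2(d-1)$.

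The crux is to show $\beta\in\Q$. Because each $w^{m}$ is a root of the monic integer polynomial $P$, it is an algebraic integer, hence so is $w$; and $|w|=1$. If $\beta\notin\Q$ then $w$ is not a root of unity, so by Kronecker's theorem $w$ has a conjugate $w'$ with $|w'|>1$; fix a field automorphism $\sigma$ of $\overline{\Q}$ with $\sigma(w)=w'$. Applying $\sigma$ to the relations $P(w^{m})=0$ shows that every $(w')^{m}$, $m\in\Delta$, is again a root of $P$; these have pairwise distinct moduli $|w'|^{m}$, hence are $|\Delta|\ge 2(d-1)$ distinct roots, all off the unit circle and so distinct from the $|\Delta|\ge 2(d-1)$ roots $w^{m}$ lying on it. Thus $P$ would have at least $4(d-1)$ distinct roots while $\deg P<\tfrac{5d}{2}-1$, which is impossible once $4(d-1)>\tfrac{5d}{2}-1$, i.e. $d\ge 3$. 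The finitely many configurations with $d\le 2$, together with the small values of $d$ for which the $3k-4$ form of Freiman's theorem does not apply directly, are to be disposed of by hand. The main obstacle I anticipate is precisely the Freiman step: making the passage from small doubling in the compact group $\R/\Z$ to a genuine progression rigorous, cleanly separating the rational (finite-subgroup) alternative from the irrational one and handling the low-dimensional exceptions, rather than the root count, which is robust.
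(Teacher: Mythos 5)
Your first and last steps are sound: the reduction of the spectral condition to ``$P(z_j\bar z_k)=0$ for $j\neq k$'' (with completeness automatic from the $d$ orthogonal vectors $(z_j^a)_{a\in A}$ in $\C^d$), and the Kronecker--Galois count at the end, are both correct. But the middle step --- the one you yourself flag as the main obstacle --- is not a technicality that can be deferred: as stated it is false, and it is where the whole difficulty of the theorem lives. In $\R/\Z$ the dichotomy ``small doubling implies $\Gamma$ lies in a coset of a finite subgroup or in a one-generator arithmetic progression'' fails. Take $\Gamma=H+\{0,\beta,2\beta,\dots,(q-1)\beta\}$ with $H$ the cyclic subgroup of order $p$, $\beta$ irrational, $d=pq$. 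Then $\Gamma-\Gamma=H+\{-(q-1)\beta,\dots,(q-1)\beta\}$ has exactly $p(2q-1)=2d-p<2d$ elements, so it satisfies your hypothesis $|\Gamma-\Gamma|<\tfrac{5d}{2}$ with room to spare, yet $\Gamma$ is contained neither in a coset of a finite subgroup (because $\beta\notin\Q$) nor in any progression $\{c+n\delta \bmod 1\}$. Any correct structure theorem at this threshold must allow such coset progressions $H+P$ (the Kneser/Green--Ruzsa phenomenon), and sharp $3k-4$-type statements in $\R/\Z$, or equivalently in groups $\Z\times\Z/N\Z$, are delicate results with exceptional configurations; there is no off-the-shelf theorem in the clean form you invoke. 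So the proposal, as it stands, is not a proof.

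For what it is worth, the paper you are reading never proves this theorem --- it is quoted from Laba's paper \cite{L2} --- so the relevant comparison is with her argument, which avoids additive structure theorems entirely: she exploits directly the multiplicative closure of the set of ``used'' roots (if $z_j$ and $z_k$ are used, so is $z_j\bar z_k$) together with Kronecker's theorem and Galois conjugation, counting roots of $P$ against $\deg P$ for an \emph{arbitrary} configuration of rational equivalence classes; the constant $\tfrac{5}{2}$ emerges from that count, with no preliminary classification of $\Gamma$. Your own final step actually shows why this is the better route: it extends verbatim to the hybrid sets above (the used roots are $\zeta w^n$, $\zeta$ ranging over $p$-th roots of unity; applying $\sigma$ with $|\sigma(w)|>1$ yields $p(2q-2)$ off-circle roots on top of the $p(2q-1)-1$ circle roots, again exceeding $\deg P<\tfrac{5d}{2}-1$ for $q\ge 2$), so the Freiman detour is not only unjustified but unnecessary. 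Two minor points: your standing assumption $\Lambda=\Gamma+\Z$, $|\Gamma|=d$, is legitimate inside this paper's framework (via the periodicity and structure theorems of \cite{BM1}, \cite{IK}) but is not needed in Laba's proof, which predates those results; and your degree count already gives a contradiction for all $d\ge 2$, so no separate treatment of small $d$ would be required once the structural step is repaired.
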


In section 2, we deduce two interesting facts about a spectrum from known results. First we observe that elements of any spectrum are either rational of transcendental; next, we relate the rationality of the specrtum to integer zeros of exponenial polynomials. In  Section 3, we show that if for some $\Omega$ such that $(\Omega,\Lambda)$ is a spectral pair and $\Omega$ contains some patterns, then $\Lambda$ has to be rational. In section 4, we prove that if the set $\Omega - \Omega$ contains some rigid structures, then the spectrum is rational.  

\section{}

\subsection{}
We first observe  that the elements of the spectrum are either rational or  transcendental. We explain this below:

We have
$$\widehat{\chi_{\Omega}}(\xi) = (1 - e^{2\pi i \xi})(1+
e^{2\pi i a_1 \xi} + ... + e^{2\pi ia_{d-1} \xi}),$$
 and for every $\lambda_j, \,\, j = 0,1, ..., d-1, \,\,\widehat{\chi_{\Omega}}(\lambda_j ) = 0$. So each $e^{2\pi i\lambda}$ is an algebraic number, in fact, an algebraic integer. Recall the following famous result:

\begin{theorem}[Gelfond-Schneider]
If $\alpha$ and $\beta$ are algebraic numbers with $\alpha \neq 0,
1,$ and if $\beta$ is not a rational number, then any value of
$\alpha^\beta = exp(\beta \log\alpha)$ is a transcendental number.
\end{theorem}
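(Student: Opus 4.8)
The plan is to prove this by the classical auxiliary-function method of transcendental number theory (the method that resolves Hilbert's seventh problem), arguing by contradiction. Suppose $\alpha,\beta$ are algebraic with $\alpha\neq 0,1$ and $\beta\notin\Q$, but that some value $\gamma:=\alpha^\beta=e^{\beta\log\alpha}$ is also algebraic. Fixing the determination of $\log\alpha$ (so that the conclusion for ``any value'' follows from the arbitrariness of the branch), I would work inside the number field $K=\Q(\alpha,\beta,\gamma)$ and seek a contradiction. The engine of the proof is the entire function
$$f(z)=\sum_{i=0}^{L}\sum_{j=0}^{L}c_{ij}\,\alpha^{iz}\gamma^{jz}=\sum_{i=0}^{L}\sum_{j=0}^{L}c_{ij}\,e^{(i+j\beta)z\log\alpha},$$
an exponential polynomial whose frequencies $\omega_{ij}=(i+j\beta)\log\alpha$ are \emph{pairwise distinct} precisely because $\beta$ is irrational: $i+j\beta=i'+j'\beta$ forces $i=i'$ and $j=j'$. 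Here $L$ is a large parameter fixed only at the end.

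First I would construct the coefficients. The key computation is that $f^{(t)}(z)=(\log\alpha)^{t}\sum_{i,j}c_{ij}(i+j\beta)^{t}\alpha^{iz}\gamma^{jz}$, so for every positive integer $s$ the quantity $f^{(t)}(s)/(\log\alpha)^{t}$ lies in $K$. Imposing the vanishing conditions $f^{(t)}(s)=0$ for $0\le t<T$ and $1\le s\le S$ gives a homogeneous linear system over $K$ in the $(L+1)^{2}$ unknowns $c_{ij}$ with $ST$ equations. Choosing $(L+1)^{2}>ST$ and clearing denominators, Siegel's Lemma (the pigeonhole bound for integer solutions of underdetermined linear systems) yields a nontrivial solution with $c_{ij}\in\Z_K$, not all zero, whose height and denominator are explicitly controlled in terms of $L,S,T$ and the heights of $\alpha,\beta,\gamma$.

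Next comes the analytic extrapolation, which is the heart of the argument. Since $f$ is entire of controlled exponential growth and already vanishes to high order at many integers, the Schwarz/maximum-modulus estimate (dividing $f$ by the polynomial with the known zeros) forces $f^{(t)}(s)$ to be extremely small at a new, slightly enlarged range of points. On the other hand, each such value divided by $(\log\alpha)^{t}$ is an element of $K$ of bounded height, so by the fundamental Liouville-type inequality a \emph{nonzero} element of $K$ cannot have archimedean absolute value smaller than an explicit function of its height and of $[K:\Q]$. Balancing the analytic upper bound against this arithmetic lower bound shows each such value must in fact vanish, so $f$ vanishes to high order on the enlarged set too. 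Iterating this extrapolation accumulates vanishing conditions on the finitely many $c_{ij}$ until a generalized (confluent) Vandermonde determinant in the distinct frequencies $\omega_{ij}$ is forced to vanish; its nonsingularity then forces every $c_{ij}=0$, contradicting the nontriviality supplied by Siegel's Lemma.

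I expect the main obstacle to be the bookkeeping in the extrapolation: one must calibrate $L,S,T$ and the number of extrapolation steps so that at every stage the gain from the growth estimate strictly beats the loss in the Liouville bound, while the total number of vanishing conditions eventually exceeds the number of free coefficients. It is exactly in making these inequalities simultaneously consistent that the two hypotheses are consumed: $\alpha\neq 0,1$ guarantees $\log\alpha\neq 0$, so the frequencies are nonzero and the growth estimates are nondegenerate, while $\beta\notin\Q$ guarantees the $\omega_{ij}$ are distinct, which is what makes the terminal Vandermonde determinant nonzero. Dropping either hypothesis collapses the argument, as it must, since the conclusion then fails.
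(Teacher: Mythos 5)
You should first be aware of a mismatch of expectations here: the paper does not prove this statement at all. Gelfond--Schneider is quoted as a classical black box (it is the solution of Hilbert's seventh problem, due independently to Gelfond and Schneider in 1934), and the paper's only contribution is the application immediately following it: taking $\alpha = -1$ and $\beta = 2\lambda_k$, so that each element of a spectrum is either rational or transcendental. So there is no in-paper proof to compare yours against; your proposal can only be judged on its own terms.

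On those terms, your outline is the correct, standard one --- it is Gelfond's auxiliary-function method, transported by the change of variable $z \mapsto z\log\alpha$ so that the interpolation points are integers. All the load-bearing ideas are present and correctly placed: the pairwise distinctness of the frequencies $(i+j\beta)\log\alpha$ is exactly where irrationality of $\beta$ enters; the observation that $f^{(t)}(s)/(\log\alpha)^{t}$ lies in $K$ is what makes the arithmetic ingredients (Siegel's lemma, the Liouville inequality) applicable; the Schwarz-lemma/Liouville alternation is the engine; and the confluent-Vandermonde endgame correctly converts ``too many zeros'' into $c_{ij}=0$, contradicting the nontriviality from Siegel's lemma. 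You also locate correctly where $\alpha \neq 0,1$ is consumed, namely through $\log\alpha \neq 0$. The one honest caveat is that what you have written is a roadmap rather than a proof: in transcendence theory the calibration of $L$, $S$, $T$, the height and denominator estimates in Siegel's lemma, and the verification that the analytic gain strictly beats the Liouville loss at every extrapolation step \emph{are} the proof, and you defer exactly that part. Since this bookkeeping is classical and known to close (any standard reference --- Baker, Lang, Waldschmidt --- executes it along precisely the lines you describe), your proposal is a faithful compression of a correct argument, but it would need those estimates filled in before it could stand as a self-contained proof.
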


We apply this theorem to  $\alpha = e^{\pi i} = -1$, and $\beta = 2\lambda_k$. Since $\alpha^\beta = e^{2\pi i\lambda_k}$ is
an algebraic integer, $2\lambda_k$ is either rational or is not an
algebraic number. In other words, elements $\lambda_k$ of the spectrum are either rational or transcendental numbers.

\subsection{}{\bf Zeros of Exponential Polynomials.} Consider the tempered distribution obtained as Dirac masses on points of $\Lambda$, i.e  the distribution
$$\delta_\Lambda = \sum_{j=0}^{d-1}\sum_{n \in \Z} \delta_{n +\lambda_j/d} = \delta_\Gamma * \delta_\Z$$
Then
$$ \widehat{\delta_\Gamma}(x) = \sum_{j=0}^{d-1} e^{2\pi i \lambda_j x/d} $$
Recall that with $\Omega$ and $\Lambda$ as above, where $|\Omega| =d$,  $(\Omega,\, \Lambda)$ is a spectral pair iff  $\frac{1}{d} |\widehat{\chi_\Omega}|^2 * \delta_\Lambda \equiv d$ iff $|\widehat{\chi_\Omega}|^2 * \delta_\Gamma * \delta_{\Z}  \equiv d^2$ iff $ ( \Omega - \Omega ) |_{\Z} \subseteq \Z( \widehat{\delta_\Gamma})\cup\{0\}$,
where $ \Z( \widehat{\delta_\Gamma}) = \left\{k \in \Z:  \widehat{\delta_\Gamma} \left(k\right)=0 \right\} $.

\noindent We write $z_j = e^{2 \pi i \lambda_j/d}$, then $$\widehat{\delta_{\Gamma}(k)} = \sum_{j=0}^{d-1}  z_j^k, \,\, k\in \Z$$

We are thus led to study the integer zeros of exponential polynomials; more specifically the integer zeros of exponential polynomials. An important result in this context is the  Skolem-Mahler-Lech theorem, which says that  the sets of integer zeros  of exponential polynomials are of the form $X\cup F$, where $X$ is a union of finitely many complete arithmetic progressions, and $F$ is a finite set. 

In \cite {LW1} Lagarias and Wang considered the case of the exponential polynomial, $\widehat{\delta_{\Gamma}}$, and gave more precise description of the set $X$, which we need to state.

 With the above rescaling, we have , $\Gamma = \{\lambda_0 =0, \lambda_1/d, ..., \lambda_{d-1}/d\}$. we writ $\gamma_ j = \lambda_j$, and define an equivalence relation on $\Gamma$ as $\gamma_i \sim \gamma_j $ iff $\gamma_i - \gamma_j \in \Q$, and we  partition $\Gamma$ into its rational equivalence classes, 
$$\Gamma  = \bigcup_1^k \Gamma_j^*$$

Then $$\widehat{\delta_{\Gamma}}(\xi)=
\widehat{\delta_{\Gamma_1^*}}(\xi)+\dots+\widehat{\delta_{\Gamma_k^*}}(\xi)$$

where,
$$ \Gamma_j^* =\left\{ \gamma_j , \gamma_j+ \frac{l_{j,2}}{m_j}, \gamma_j+ \frac{l_{k,3}}{m_j}, \dots, \gamma_j+ \frac{l_{j,n_j}}{m_j} \right\}$$

Notice

$$\widehat{\delta_{\Gamma_j^*}}(\xi)= e^{2 \pi i \gamma_j \xi} (1+ e^{2 \pi i \frac{l_{j,2}}{m_j}\xi}+ \dots + e^{2 \pi i \frac{l_{j,n_j}}{m_j}\xi})$$

Hence if $$\widehat{\delta_{\Gamma_j^*}}(\xi_0)=0 \mbox{ then }
\widehat{\delta_{\Gamma_j^*}}(\xi_0+m_j p)=0, \forall p \in \Z$$

Thus the zero set of $\widehat{\delta_{\Gamma_j^*}}$ is
$m_j$ periodic.

Let $M:= LCM \left\{m_1,m_2,\dots,m_k\right\}$. Let $X$ be the common integer zero set of
$\{\widehat{\delta_{\Gamma_j^*}}\}$ i.e.
$$X:= \bigcap_{i=1}^k \Z(\widehat{\delta_{\Gamma_j^*}})$$

Then $X$ is $M$ periodic and $X \subseteq \Z(\widehat{\delta_{\Gamma}})$, and $F:= \Z(\widehat{\delta_{\Gamma}}) \setminus X$ is a finite set.

As a consequence, we prove:

\begin{theorem}
Let $(\Omega, \Lambda)$ be a spectral pair as above. Then $\Lambda$ is rational  if and only if $ (A-A) \cap F = \phi $.
\end{theorem}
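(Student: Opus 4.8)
The plan is to connect the decomposition $\Z(\widehat{\delta_\Gamma}) = X \cup F$ to the number $k$ of rational equivalence classes of $\Gamma$, and then to extract a density bound from a frame structure. The starting observation is that, because $0 = \lambda_0 \in \Gamma$, the spectrum $\Lambda = \Gamma + \Z$ is rational if and only if $\Gamma$ reduces to a single rational class, i.e. $k = 1$: if every element of $\Gamma$ is rational they all lie in the class of $0$, and conversely $k=1$ means the class of $0$ is all of $\Gamma$, forcing every element to be rational.

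For the forward implication, assume $\Lambda$ is rational. Then $k=1$ and $\Gamma = \Gamma_1^*$, so $X = \Z(\widehat{\delta_{\Gamma_1^*}}) = \Z(\widehat{\delta_\Gamma})$ and hence $F = \Z(\widehat{\delta_\Gamma}) \setminus X = \phi$; in particular $(A-A) \cap F = \phi$.

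The content is in the converse. Assuming $(A-A) \cap F = \phi$, the spectral-pair characterization gives $(A-A)\setminus\{0\} \subseteq \Z(\widehat{\delta_\Gamma}) = X \cup F$, so the hypothesis forces $(A-A)\setminus\{0\} \subseteq X = \bigcap_{i=1}^{k} \Z(\widehat{\delta_{\Gamma_i^*}})$. Setting $\Gamma_i^\circ := \Gamma_i^* - \gamma_i = \{0, l_{i,2}/m_i, \dots, l_{i,n_i}/m_i\}$, the factorization $\widehat{\delta_{\Gamma_i^*}}(n) = e^{2\pi i \gamma_i n}\,\widehat{\delta_{\Gamma_i^\circ}}(n)$ gives $\Z(\widehat{\delta_{\Gamma_i^*}}) = \Z(\widehat{\delta_{\Gamma_i^\circ}})$, so for each class $(A-A)\setminus\{0\} \subseteq \Z(\widehat{\delta_{\Gamma_i^\circ}})$. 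Applying to $\Gamma_i^\circ$ the very computation that produced the spectral-pair characterization---using that the autocorrelation of $\chi_\Omega$ is supported on $A-A$ at integers and that $\widehat{\delta_{\Gamma_i^\circ}}(0) = n_i$---this is equivalent to $\frac1d |\widehat{\chi_\Omega}|^2 * \delta_{\Gamma_i^\circ + \Z} \equiv n_i$, i.e. $\{e_\nu : \nu \in \Gamma_i^\circ + \Z\}$ is a tight frame for $L^2(\Omega)$ with $\sum_\nu |\langle f, e_\nu\rangle|^2 = n_i \|f\|^2$ for all $f$.

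I would close with a frame-element test. Taking $f = e_{\nu_0}$ for some $\nu_0 \in \Gamma_i^\circ + \Z$, every summand is nonnegative and the diagonal term alone is $\|e_{\nu_0}\|^4 = |\Omega|^2 = d^2$, while the right side is $n_i \|e_{\nu_0}\|^2 = n_i d$; hence $n_i \geq d$ for every $i$. Since the classes partition $\Gamma$ we have $\sum_{i=1}^{k} n_i = |\Gamma| = d$ with each $n_i \geq 1$, and together with $n_i \geq d$ this is only possible if $k = 1$ and $n_1 = d$. Then $\Gamma$ is one rational class containing $0$, so $\Lambda$ is rational. I expect the main difficulty to be the middle step: verifying that each demodulated family $\Gamma_i^\circ + \Z$ really inherits the tight-frame identity with the exact constant $n_i$ (getting the normalization right), since this is precisely where containment in the periodic part $X$, rather than in the full zero set $\Z(\widehat{\delta_\Gamma})$, is essential.
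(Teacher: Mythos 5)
Your proof is correct and is essentially the paper's own argument: both rest on the same key step, namely that $(A-A)\cap F=\emptyset$ forces the function $|\widehat{\chi_\Omega}|^2 * \delta_{\Gamma_i^*} * \delta_\Z$ to be the constant $d\,n_i$, which is then compared with the diagonal contribution $|\widehat{\chi_\Omega}(0)|^2=d^2$ at a point of the class. The only (harmless) difference is that the paper runs this for just the class $\Gamma_1^*$ containing $0$, where the inclusion $\Gamma_1^*+\Z\subseteq\Lambda$ yields the exact equality $d\,|\Gamma_1^*|=d^2$ immediately, whereas you demodulate every class to get the inequality $n_i\geq d$ and close with the count $\sum_i n_i=d$.
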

\begin{proof}
Clearly if $\Lambda$ is rational, then $F= \phi$. For the converse, we have

$$\chi_\Omega * \chi_{\Omega}.\widehat{\delta_{\Gamma}}. \delta_{\Z} = d^2$$
 Consider the equivalence class $\Gamma_1^*$, for which
$$\chi_\Omega * \chi_{ \Omega}.\widehat{\delta_{\Gamma_1}}. \delta_{\Z} =d  |\Gamma_1| \leq d^2.$$
 But 
$$|\widehat{ \chi_\Omega}|^2 * \delta_{\Gamma_1^*} * \delta_{\Z} (0) =  d^2$$
Hence $|\Gamma_1^*| =d$, so that there is only one rational class.
\end{proof}

\begin{remark}

\noindent \begin{enumerate}
\item For the Skolem-Mahler-Lech theorem, the structure and cardinality of the finite set $F$ have been studied (see \cite {E}, and references therein), but "effective" results are largely unknown \cite {E}, \cite {Tblog}.

\item In the case when each of the equivalence classes $\Gamma_j^*$ are singletons, i.e. $\lambda_j - \lambda_k \notin \Q $ for all $j \neq k$, then it can be easily seen that in fact $\Z(\widehat{\delta_\Gamma} ) = F$. (See Corollary 1.20 in \cite {E}).
\end{enumerate}

\end{remark}

\section{}
In this section we show that the existence of some specific structures  or patterns (which we call {\it flags}) in the zero set $\Z(\widehat{\delta_\Gamma})$ guarantees the rationality of $\Gamma$.

We begin with a result which follows from a result of  Jager \cite {Jag}. We state it in our setting and give a proof for the sake completeness. ( Jager's paper  is difficult to find).

\begin{definition}
For  fixed integers $m$, $r$ and $N > r$,   let $S_0 = \{m+1, m+2, ...m+r\}$ and for an $N \geq r$  let $S_n = S_{n-1} + N, \,\, n =1,2, ..., s-1$. These $S_n$'s are called strips and the the set $F = \cup_0^{s-1} S_n$ is called an $r\times s\,\, $-flag. We will think of a flag as an array:

\begin{equation*} \begin{array}{cccc}
m+1 & m+2 & \cdots & m+r \\ 
\circ & \circ & \cdots & \circ\\
m+N+1 & m+N+2 & \cdots & m+N+r \\
\circ & \circ & \cdots & \circ\\
\vdots &\vdots & \ddots & \vdots \\
m+(s-1)N+1 & m+(s-1)N+2 & \cdots & m+(s-1)N+r \\
\circ & \circ & \cdots & \circ\\
\end{array} 
\end{equation*}

\end{definition}
Observe that the $S_n$'s are all disjoint, since $N  > r$. One can think of a flag as a rectangular array of $s$ points  $m+1, m+N+1, ...m+(s-1)N+1$ on a vertical pole, and with $s$ horizontal strips $m +nN +1, m+nN +2, ..., m+nN +r, \,\,\,n = 0,1, ...(s-1)$.

Let $\Gamma = \{0, \lambda_1, ..., \lambda_{d-1} \}$. It is easy to see that If $d >1$, then a $d \times 1$ flag cannot be contained in  $\Z( \widehat{\delta_\Gamma})\cup\{0\} $. ( by a simple Vandermonde argument). However, if $S_0$ is a strip of shorter length, we will consider several such strips in a flag configuration as above, and prove:

\begin{theorem}
Fix two integers $m, r$ with $[\frac{d}{2}] \leq r < d$.  Suppose an $r \times d$ flag $F \subset \Z(\hat{\delta_\Gamma})$. Then $\Gamma$ is rational.
\end{theorem}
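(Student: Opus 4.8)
The plan is to rewrite the hypothesis as a statement about vanishing power sums and then to peel off the irrational part by two successive Vandermonde arguments. Write $z_j = e^{2\pi i \lambda_j/d}$, so that $\widehat{\delta_\Gamma}(k) = \sum_{j=0}^{d-1} z_j^k$ and, since $\lambda_0 = 0$, we have the anchor $z_0 = 1$; recall that $\Gamma$ is rational exactly when every $z_j$ is a root of unity. The hypothesis $F \subset \Z(\widehat{\delta_\Gamma})$ says precisely that
$$\sum_{j=0}^{d-1} z_j^{\,m+nN+i} = 0 \qquad (0 \le n \le d-1,\ 1 \le i \le r),$$
so I have $d$ rows of $r$ consecutive vanishing power sums, the rows spaced $N$ apart.

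First I would run a Vandermonde argument along the pole (the $n$-direction). Fix a column $i$ and set $w_j = z_j^N$. Grouping the indices according to the distinct values $W_1,\dots,W_q$ taken by the $w_j$ (put $j \in I_p$ when $w_j = W_p$), the row relations become $\sum_{p=1}^q c_p^{(i)} W_p^{\,n} = 0$ for $n = 0,\dots,d-1$, where $c_p^{(i)} = \sum_{j \in I_p} z_j^{m+i}$. Since the $d$ numbers $z_j^N$ take at most $d$ distinct values we have $q \le d$, so the first $q$ of these $d$ equations form an invertible Vandermonde system in the distinct $W_p$; hence $c_p^{(i)} = 0$ for every group $p$ and every column $i$. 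In other words, inside each group $I_p$ the $r$ consecutive power sums $\sum_{j\in I_p} z_j^{m+1},\dots,\sum_{j\in I_p} z_j^{m+r}$ all vanish. This is exactly where the flag needs $d$ strips: I must have at least as many rows as there can be distinct values of $z_j^N$.

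Next I would apply a second Vandermonde argument inside a single group together with a counting step. Let $e_p$ be the number of distinct values among $\{z_j : j \in I_p\}$. If $e_p \le r$, then taking the first $e_p$ of the $r$ vanishing power sums and factoring out $\eta^{m+1}$ from each term gives an invertible Vandermonde system in the distinct $\eta$'s, forcing all multiplicities to vanish, which is absurd; hence $e_p > r$. Therefore each group satisfies $|I_p| \ge e_p > r \ge \lfloor d/2 \rfloor$, i.e.\ strictly more than $d/2$ indices. Since $I_1,\dots,I_q$ partition the $d$ indices $\{0,\dots,d-1\}$, there can be only one group: $q = 1$, so all the $z_j^N$ coincide. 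Because $z_0 = 1$, this common value is $z_0^N = 1$, whence every $z_j$ is an $N$-th root of unity and $\Gamma$ is rational.

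The main obstacle is pinning the threshold on $r$ precisely so that the pigeonhole closes. The counting works only because the second Vandermonde step gives the \emph{strict} inequality $|I_p| > r$, and $r \ge \lfloor d/2\rfloor$ forces $|I_p| > d/2$ for every group whether $d$ is even or odd; a weaker bound on $r$ would permit two coexisting groups and the argument would stall at ``the $z_j$ split into several families, each internally differing by roots of unity'' without yielding rationality. Secondary points to treat carefully are the bookkeeping of multiplicities (equal $z_j$'s) in both Vandermonde steps, and the role of the anchor $z_0 = 1$, which is exactly what upgrades ``all $z_j^N$ equal'' to ``all $z_j$ are roots of unity.''
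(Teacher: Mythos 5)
Your proposal is correct and follows essentially the same route as the paper's proof: a first Vandermonde argument grouping the $z_j$ by the values of $z_j^N$ (using the $d$ strips) to make each group's power sums vanish, then a second Vandermonde argument inside a group combined with the bound $r \ge [\frac{d}{2}]$ to force a single group, hence $z_j^N = z_0^N = 1$ for all $j$. The only differences are cosmetic: you show every group has more than $r$ distinct elements and pigeonhole, where the paper argues by contradiction on a smallest class, and you make explicit the final step (anchor $z_0=1$ giving roots of unity) that the paper leaves implicit.
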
 
\begin{proof} .  Let $z_j = e^{2\pi i \lambda_j}$. The hypothesis implies that for each $k = 1,2, ..., r $, we have the following system of equations:
\begin{eqnarray*}
\sum_{j=0}^{d-1} z_j^{m+k} & = & 0 \\
\sum_{j=0}^{d-1} z_j^{m+k+N} & = & 0 \\
\vdots \\
\sum_{j=0}^{d-1} z_j^{m+k+(d-1)N} & = & 0
\end{eqnarray*}

Equivalently, for every $k = 1,2,..., r$,
\begin{equation*}
\left(\begin{array}{cccc}
1 & 1 & \cdots & 1  \\
1 & z_1^N & \cdots  & z_{d-1}^N\\
\vdots & \vdots & \ddots & \vdots \\
1 & z_1^{(d-1)N} &  \cdots & z_{d-1}^{(d-1)N} \\
\end{array}\right)
\left(\begin{array}{c}
1\\ z_1^{m+k}\\ \vdots \\ z_{d-1}^{m+k}\\
\end{array} \right)=
\left(\begin{array}{c}
0 \\ 0 \\ \vdots \\ 0\\
\end{array} \right)
\end{equation*}

Thus we can conclude that the above Vandermonde matrix is singular. Hence $z_i^N = z_j^N$ for some $i \neq j$. We define an equivalence relation $z_k \sim z_l  \iff z_k^N = z_l^N$, and we write $\rho_j$ as a representative of each equivalence class so obtained, $ j= 0,1, ..., t$. Also let $[\rho_j] = \{z_{j1}, z_{j2},...z_{jl_j}\}$ be the set of elements ($l_j$ in number) in the $j$th equivalence class. We can now extract a subsystem of the above system of equations:
\begin{equation*}
\left(\begin{array}{cccc}
1 & 1 & \cdots & 1  \\
1 & \rho_1^N & \cdots  & \rho_t^N\\
\vdots & \vdots &  \ddots \vdots \\
1 & \rho_1^{(t-1)N} &  \cdots & \rho_t^{(t-1)N} \\
\end{array}\right)
\left(\begin{array}{c}
\sum_1^{l_0} z_{0s}^{m+k}  \\ \sum_1^{l_1} z_{1s}^{m+k}\\ \vdots \\\sum_1^{l_{t-1}} z_{(t-1)s}^{m+k}\\
\end{array} \right)=
\left(\begin{array}{c}
0 \\ 0 \\ \vdots \\ 0\\
\end{array} \right)
\end{equation*}
Now the $\rho_j$'s are all distinct, and hence the Vandermonde matrix on the left is non-singular. Hence, for each $k= 1,2,...,r$ and $j = 0,1, ... ,t$, we have
$$\sum_{s=1}^{l_j} z_{js}^{m+k} = 0.$$ 
Suppose $t >1$, i.e. there are more than one equivalence classes $[\rho_j]$, then we can choose one equivalence class, say $\rho_{j_0}$ which has less than or equal to $ [\frac{d}{2}] + 1 \leq r$ elements. For this $j_0$, we consider the first $l_{j_0}$ equations from the above set of $r $ equations. We have,
 
\begin{equation*}
\left(\begin{array}{cccc}
z_{j_0 1} & z_{j_0 2} & \cdots & z_{j_0 l_{j_0}}   \\
z_{j_0 1} ^2& z_{j_0 2}^2 & \cdots & z_{j_0 l_{j_0}}^2 \\
\vdots & \vdots &  \ddots \vdots \\
z_{j_0 1}^{l_{j_0}} & z_{j_0 2}^{l_{j_0}} & \cdots & z_{j_0 l_{j_0}}^{l_{j_0}} \\
\end{array}\right)
\left(\begin{array}{c}
 z_{j_0 1}^{m}  \\  z_{j_0 2}^{m}\\ \vdots \\ z_{j_0 l_{j_0}}^{m}\\
\end{array} \right)=
\left(\begin{array}{c}
0 \\ 0 \\ \vdots \\ 0\\
\end{array} \right)
\end{equation*}

\noindent This is a contradiction since the $z_{j_0 i}$'s are all distinct. Hence there is only one equivalence class.
\end{proof}

\begin{remark}
If $\Lambda = \Gamma + \Z$ is a spectrum for a set $\Omega = A + [0,1]$, with $|A| =d$, we know that $A-A \subset \Z \widehat{(\delta_\Gamma)}$. Since $|A-A| \leq \frac{d(d-1)}{2}$, an $r\times d$ flag with $[\frac{d}{2}]+1 \leq r$ cannot be contained in $A-A$. In the next theorem we will improve Jager's's result and show that rationality follows from the existence of a 'shorter' flag in the integer zero set $\Z\widehat{(\delta_\Gamma)}$. Hence if the set $A - A$ itself  has enough structure to contain these shorter flags, then we can conclude the rationality of the spectrum.
\end{remark}

We will now extend a result due to Tijdeman \cite {Tij} , which in our notation can be stated as
\begin{theorem}{\cite {Tij}}
Let $r = d-1$ and suppose that a $(d-1) \times 2$ flag is contained in  $\Z\widehat{(\delta_\Gamma)}$, then the extended $(d-1) \times d$ flag is also contained in $\Z\widehat{(\delta_\Gamma)}$.
\end{theorem}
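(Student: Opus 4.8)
The plan is to translate the two strips of the flag into orthogonality relations and then force all the quantities $z_j^N$ to coincide by a dimension count. Following the convention of the preceding proof, write $z_j = e^{2\pi i \lambda_j}$, $j = 0,\dots,d-1$; these are distinct and nonzero, and $\widehat{\delta_\Gamma}(\ell)=\sum_{j=0}^{d-1} z_j^\ell$. Set $w_j = z_j^N$ and, for $k=1,\dots,d-1$, let $a^{(k)} = (z_0^{m+k}, z_1^{m+k}, \dots, z_{d-1}^{m+k}) \in \C^d$. The hypothesis that the strip $S_0$ lies in $\Z(\widehat{\delta_\Gamma})$ says exactly that $\langle a^{(k)}, \mathbf 1\rangle = 0$ for each such $k$, where $\mathbf 1=(1,\dots,1)$ and $\langle x,y\rangle=\sum_j x_j y_j$ is the (non-degenerate, symmetric) bilinear form; the hypothesis that the second strip $S_1 = S_0 + N$ lies in the zero set says $\langle a^{(k)}, w\rangle = 0$, where $w=(w_0,\dots,w_{d-1})$, since $z_j^{m+k+N} = z_j^{m+k} w_j$.

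First I would record that the $d-1$ vectors $a^{(1)},\dots,a^{(d-1)}$ are linearly independent: the matrix with these columns is $\operatorname{diag}(z_j^m)$ times the $d\times(d-1)$ submatrix (columns $1,\dots,d-1$) of the Vandermonde matrix of the distinct points $z_0,\dots,z_{d-1}$, both of full column rank. Hence $\operatorname{span}\{a^{(1)},\dots,a^{(d-1)}\}$ is a $(d-1)$-dimensional subspace contained in $\{\mathbf 1,w\}^\perp$. Since the bilinear form is non-degenerate, $\dim\{\mathbf 1,w\}^\perp = d - \dim\operatorname{span}\{\mathbf 1,w\}$, so $d-1 \le d - \dim\operatorname{span}\{\mathbf 1,w\}$, forcing $\dim\operatorname{span}\{\mathbf 1,w\}\le 1$. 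As $\mathbf 1\neq 0$, this means $w$ is a scalar multiple of $\mathbf 1$: all the $w_j = z_j^N$ equal a common value $\rho$.

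With this in hand the extension is immediate. For every $n\ge 0$ and every $k=1,\dots,d-1$ we have $\widehat{\delta_\Gamma}(m+k+nN) = \sum_j z_j^{m+k}(z_j^N)^n = \rho^n \sum_j z_j^{m+k} = \rho^n\,\widehat{\delta_\Gamma}(m+k) = 0$, using the strip $S_0$ relations. Thus not merely the $(d-1)\times d$ flag but in fact every strip $S_n$, $n\ge 0$, lies in $\Z(\widehat{\delta_\Gamma})$, which proves (and slightly strengthens) the theorem.

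The step that carries the weight is the dimension count, and the reason the hypothesis requires strips of length exactly $r=d-1$ becomes transparent there: one needs precisely $d-1$ independent vectors $a^{(k)}$ to squeeze $\{\mathbf 1,w\}^\perp$ down to dimension $\le d-1$ and thereby collapse $\operatorname{span}\{\mathbf 1,w\}$ to a line. A shorter strip would produce too few orthogonality constraints to force the $z_j^N$ to coincide, which is exactly why Jager's theorem, working with shorter strips, needed many ($d$) strips together with the more delicate equivalence-class argument. The only thing to verify with any care is the full column rank of the truncated Vandermonde factor, which follows from the $z_j$ being distinct and nonzero.
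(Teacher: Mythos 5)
Your proof is correct, and it takes a genuinely different route from the paper's. The paper gives no separate argument for this statement: it is quoted from Tijdeman \cite{Tij} and is recovered only as the special case $r=d-1$ of Theorem 6, which the paper proves by a recurrence argument --- the power sums $f_k=\sum_j z_j^k$ satisfy the $d$-term Newton--Girard recurrence, written as $U\nu_k=\nu_{k+1}$ with $U$ the companion matrix and $\nu_k=(f_k,\dots,f_{k+d-1})^T$; the $d-r+1$ vectors $\mu_j=\nu_{m+jN+1}$ lie in the $(d-r)$-dimensional subspace of vectors whose first $r$ coordinates vanish, hence are linearly dependent, and applying $U^N$ repeatedly propagates that vanishing pattern to every later strip. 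You instead convert the two strips into orthogonality of the $d-1$ independent vectors $a^{(k)}$ against $\mathbf 1$ and $w=(z_0^N,\dots,z_{d-1}^N)$ and run a dimension count. This is sound: the truncated Vandermonde factor has full column rank because the $z_j$ are distinct and nonzero, and the formula $\dim W^\perp=d-\dim W$ needs only non-degeneracy of the symmetric bilinear form, so the possible presence of isotropic vectors is harmless. What your route buys is sharpness in the case it covers: since $\lambda_0=0$ gives $z_0=1$, your conclusion $z_j^N=\rho$ for all $j$ forces $\rho=1$, hence $N\lambda_j\in\Z$ for every $j$, so the rationality of $\Gamma$ drops out at once, with no detour through the extended flag and the equivalence-class analysis of Theorem 4 on which the paper's Theorem 7 relies. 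What the paper's route buys is generality: as you note yourself, for $r<d-1$ there are too few vectors $a^{(k)}$ to collapse $\operatorname{span}\{\mathbf 1,w\}$ to a line, whereas the recurrence argument is insensitive to this and yields Theorem 6 for all $r$.
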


Our extension of this theorem is for smaller values of $r$.

\begin{theorem}
Suppose that an $r \times (d-r+1)$ flag is contained in  $\Z(\hat(\delta_\Gamma)$, then the extended $r \times d$ flag is also contained in $\Z(\hat(\delta_\Gamma)$.
\end{theorem}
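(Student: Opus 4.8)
The plan is to recast the flag conditions as a statement in linear algebra about the kernel of a Vandermonde matrix built from the $N$-th powers of the $z_j$, and then to show that the hypothesis already forces this kernel to be as large as it can ever become, so that adjoining the extra strips imposes no new constraints. Write $z_j = e^{2\pi i \lambda_j}$ and $w_j = z_j^{N}$, and set $y^{(k)} = (z_0^{m+k}, z_1^{m+k}, \dots, z_{d-1}^{m+k})^{T} \in \C^{d}$ for $k = 1, \dots, r$. For $1 \le s \le d$ let $M_s = (w_j^{\,n})_{0 \le n \le s-1,\ 0 \le j \le d-1}$ be the $s \times d$ Vandermonde matrix in the $w_j$. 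Since $\widehat{\delta_\Gamma}(m+k+nN) = \sum_j z_j^{m+k} w_j^{\,n}$ is the $n$-th coordinate of $M_s\, y^{(k)}$, the hypothesis that the $r \times (d-r+1)$ flag lies in $\Z(\widehat{\delta_\Gamma})$ is exactly the assertion $M_{d-r+1}\, y^{(k)} = 0$ for every $k = 1, \dots, r$, while the desired conclusion is $M_d\, y^{(k)} = 0$ for the same $k$.

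First I would record two rank facts. The vectors $y^{(1)}, \dots, y^{(r)}$ are linearly independent: the $r \times d$ matrix with rows $y^{(k)}$ factors as a Vandermonde in the distinct $z_j$ times $\mathrm{diag}(z_j^{m})$, and any $r \times r$ minor is a nonzero Vandermonde determinant. Hence $\ker M_{d-r+1}$ contains $r$ independent vectors, so $\mathrm{rank}\, M_{d-r+1} \le d - r$. On the other hand, because $M_s$ is Vandermonde in the $w_j$, its rank is exactly $\min(t, s)$, where $t$ is the number of distinct values among $w_0, \dots, w_{d-1}$ (any $\min(t,s)$ distinct columns form an invertible square Vandermonde block). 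Comparing the two, $\min(t, d-r+1) \le d-r$ is impossible unless $t \le d-r$; in particular $\mathrm{rank}\, M_{d-r+1} = t \le d - r$.

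With $t \le d - r$ in hand the conclusion is immediate. For every $s$ with $t \le s \le d$ one has $\mathrm{rank}\, M_s = t$, hence $\dim \ker M_s = d - t$ independently of $s$; since the row set of $M_{d-r+1}$ is contained in that of $M_d$ we have the nesting $\ker M_d \subseteq \ker M_{d-r+1}$, and equality of dimensions forces $\ker M_d = \ker M_{d-r+1}$. Therefore each $y^{(k)}$, already known to lie in $\ker M_{d-r+1}$, also lies in $\ker M_d$, which is precisely the statement that the extended $r \times d$ flag is contained in $\Z(\widehat{\delta_\Gamma})$. The main point, which I expect to carry the whole argument, is the rank bound $t \le d-r$: it says the hypothesis caps the number of distinct $N$-th powers $z_j^{N}$, and once that cap is reached the Vandermonde kernel cannot grow when further strips are appended. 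The only care needed is the bookkeeping around distinctness of the $z_j$ (so that the $y^{(k)}$ are genuinely independent) and the boundary behaviour of $\min(t,s)$ for the rank, which specializes at $r = d-1$ to Tijdeman's case $t = 1$.
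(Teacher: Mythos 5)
Your proof is correct, and it takes a genuinely different route from the paper's. You turn the flag conditions into kernel statements for the rectangular Vandermonde matrices $M_s$ built from the $N$-th powers $w_j = z_j^N$, establish the rank formula $\mathrm{rank}\, M_s = \min(t,s)$ with $t$ the number of distinct $w_j$, and note that the $r$ linearly independent vectors $y^{(k)}$ lying in $\ker M_{d-r+1}$ force $t \le d-r$; from there the kernels stabilize, $\ker M_{d-r+1} = \ker M_d$, and the extended flag follows. The paper argues instead through the power sums $f_k = \sum_j z_j^k$ themselves: they satisfy a $d$-term linear recurrence (Newton--Girard), encoded by a companion matrix $U$ acting on the windows $\nu_k = (f_k, \dots, f_{k+d-1})^T$ via $U^N \nu_{m+jN+1} = \nu_{m+(j+1)N+1}$; the hypothesis places the $d-r+1$ windows $\mu_j = \nu_{m+jN+1}$, $0 \le j \le d-r$, in the $(d-r)$-dimensional subspace of vectors whose first $r$ coordinates vanish, forcing a linear dependence which is then propagated forward by iterating $U^N$, so every later window stays in that subspace. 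The two arguments buy different things. Yours yields the quantitative byproduct $t \le d-r$ --- a cap on the number of distinct $N$-th powers --- which is exactly the kind of collision statement that drives the equivalence-class argument in the proof of Theorem 4, and which specializes at $r = d-1$ to Tijdeman's case $t=1$; it also shows the flag extends to arbitrarily many strips, since $\ker M_s$ is the same for every $s \ge t$. The paper's recurrence formulation makes the indefinite propagation equally transparent, but its written justification has a loose step: it asserts, from mere linear dependence of $\mu_0, \dots, \mu_{d-r}$, that the next vector is a combination of these, which strictly requires first isolating the largest index appearing with a nonzero coefficient and then pushing forward with $U^N$. Your dimension count sidesteps that issue entirely, so your version is, if anything, the tighter of the two.
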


Let $z_0 =1, z_1, ..., z_{d-1}$ be distinct complex numbers and $$f_k =  \sum_0^{d-1} z_j^k$$

Then the $f_k$s satisfy a $d$-term recurrence relation given by the Newton-Girard Formulae:
\begin{equation}
\left(\begin{array}{ccccc}
0 & 1 &  0 &\cdots & 0  \\
 0 &  0 & 1  & \cdots  & 0\\
\vdots & \vdots & \vdots  & \cdots & \vdots \\
0 & 0 & 0 & \cdots & 1\\
c_0 & c_1 &  c_2 & \cdots & c_{d-1}\\
\end{array}\right)
\left(\begin{array}{c}
f_k \\ f_{k+1} \\ \vdots \\  \\ f_{d+k-1}\\
\end{array} \right)=
\left(\begin{array}{c}
f_{k+1} \\ f_{k+2} \\ \vdots  \\  \\ f_{k+ d +1}\\
\end{array} \right)
\end{equation}

In brief, we will write these equations as:
$$U\nu_k = \nu_{k+1}$$

\begin{proof}[Proof of Theorem 6]

Let $F$ be the $r \times (d-r+1) \,\,$ flag contained in $A-A \setminus \{0\} \subset \Z(\hat(\delta_\Gamma)$. Then, each of the vectors $\nu_{m+1},\, \nu_{m+N+1}, ..., \nu_{m+(d-r)N+1}$ will be of the form given below:

\begin{equation*} \left(\begin{array}{c}
0 \\ 0 \\ \vdots \\ 0 \\ * \\ \vdots \\ * \\
\end{array} \right)
\end{equation*}
where the first $r$ entries are $0$. For ease of notation we will write $\mu_j = \nu_{m+jN+1}, \, j= 0,1,...(d-r) $. Clearly these $d-r+1$ vectors are linearly dependent, so there exist constants $\alpha_0, \alpha_1, ...,\alpha_{d-r}$ such that
$$ \mu_{d-r+1} = \alpha_0,\mu_0 + \alpha_1\mu_1+ ...+ \alpha_{d-r} \mu_{d-r}$$

Now we apply the Newton-Girard matrix $U$  $N$ times to get $$U^N\mu_j = \mu_{j+1}$$
Repeating this process, we see that all the vectors $ \nu_{m+jN+1} $ are of the same form. But this means that the extended $r \times d$ flag is also contained in $\Z\widehat{(\delta_\Gamma)}$.
\end{proof}

Combined with Theorem 4, we get

\begin{theorem}
Let $\Lambda = \Gamma + \Z$ be a spectrum for a set $\Omega = A + [0,1]$, with $A \subset \Z \,\,\,|A| =d$. Suppose $r \geq [\frac{d}{2}]$, and  an $r \times (d-r+1)$ flag is contained in  $A - A$, then $\Lambda$ is rational.
\end{theorem}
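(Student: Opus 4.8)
The plan is to obtain Theorem 7 as a corollary, chaining the spectral characterization of Section 2 with Theorems 6 and 4 in that order. The whole content is to transport the given flag into the integer zero set $\Z(\widehat{\delta_\Gamma})$ and then invoke the two preceding results; no new estimate is needed.

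First I would translate the spectral hypothesis into a statement about $\Z(\widehat{\delta_\Gamma})$. Since $(\Omega,\Lambda)$ is a spectral pair with $\Omega = A + [0,1]$, the equivalence recorded in Section 2.2 gives $(\Omega-\Omega)|_\Z \subseteq \Z(\widehat{\delta_\Gamma})\cup\{0\}$. Embedding $A$ into $\Omega$ by its left endpoints shows $A-A\subseteq(\Omega-\Omega)\cap\Z$, and since $\widehat{\delta_\Gamma}(0)=d\neq 0$ forces $0\notin\Z(\widehat{\delta_\Gamma})$, we get $(A-A)\setminus\{0\}\subseteq\Z(\widehat{\delta_\Gamma})$. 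Hence the hypothesized $r\times(d-r+1)$ flag, which lies in $(A-A)\setminus\{0\}$, is contained in $\Z(\widehat{\delta_\Gamma})$. Applying Theorem 6 to this flag produces the full extended $r\times d$ flag inside $\Z(\widehat{\delta_\Gamma})$. It then remains to check the index range $[\frac{d}{2}]\le r<d$ demanded by Theorem 4: the lower bound is exactly the hypothesis, while $r=d$ is impossible because it would make the given flag a $d\times 1$ flag (the case $d-r+1=1$), which cannot sit in $\Z(\widehat{\delta_\Gamma})\cup\{0\}$ by the Vandermonde remark preceding Theorem 4. With the extended $r\times d$ flag and $[\frac{d}{2}]\le r<d$ in hand, Theorem 4 yields that $\Gamma$ is rational, and therefore $\Lambda=\Gamma+\Z$ is rational.

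Because every analytic ingredient is already in place, I do not expect a real obstacle; the effort is purely one of matching hypotheses. The two points that need genuine care are the bookkeeping checks above: verifying that the spectral condition truly deposits the differences $A-A$ themselves (and not merely some coarser set of integer differences arising from the intervals $[0,1]$) into $\Z(\widehat{\delta_\Gamma})$, and confirming the strict inequality $r<d$ so that Theorem 4 applies in its stated range rather than in the excluded endpoint case.
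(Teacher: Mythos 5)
Your proposal is correct and follows exactly the route the paper intends: the paper derives this theorem as an immediate corollary, noting only that the spectral condition places $(A-A)\setminus\{0\}$ inside $\Z(\widehat{\delta_\Gamma})$, Theorem 6 extends the $r\times(d-r+1)$ flag to an $r\times d$ flag, and Theorem 4 then gives rationality of $\Gamma$, hence of $\Lambda=\Gamma+\Z$. Your two bookkeeping checks (that it is genuinely $A-A$, not a coarser set, that lands in the zero set, and that $r=d$ is ruled out by the Vandermonde remark so Theorem 4 applies) are exactly the hypotheses the paper leaves implicit, so the proposal is if anything slightly more careful than the original.
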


We have found that the existence of some variations of the flag patterns in $A-A$ again imply rationality of the spectrum.


\begin{thebibliography}{1}

\bibitem{BCKM} Bose, Debashish, Kumar, C. P. Anil, Krishnan, R, Madan, Shobha. On Fuglede's conjecture for three intervals. Online J. Anal. Comb. No. 5 (2010), 24 pp.

\bibitem{BM2} Bose, D. and Madan, S. Spectral implies Tiling for three intervals revisited.  Forum Math. 26 (2014), 1247–1260.

\bibitem{BM1} Bose, D. and Madan, S. The Spectrum is periodic for $n$-intervals.  J. Funct. Anal. 260 (2011), no. 1, 308–325.



\bibitem{DL}  Dutkay, D.E and Lai , C-K.  Some reductions of the spectral set
conjecture to integers . Matheatical Proceedings of the Cambridge Philosophical Society 156 (2014) ,
123-135.

\bibitem {E} G. Everest, A.J. van der Poorten, I. Sparlinski, T. Ward, Recurrence sequences, in: Math. Surveys and Monographs, vol. 104, American Math. Soc, 2003.

\bibitem{Fug} Fuglede, F., Commuting self-adjoint partial differential operators and a group theoretic problem. J. Funct. Anal. 16 (1974), 101-121.
 
\bibitem {LevG} Greenfeld, R and  Lev, Nir. Fuglede's spectral set conjecture for convex polytopes.  arXiv:1602.08854v1 [math.CA]
\bibitem {IK} Iosevich and Kolountzakis, M. Periodicity of the spectrum in dimension one. Anal. PDE 6 (2013),  819–827.

\bibitem{IKT2} Iosevich, A., Katz, N.H. and Tao. T., The Fuglede spectral conjecture holds for convex planar domains. Math. Res. Lett. 10 (2003), no. 5-6, 559-569.

\bibitem{Jag} Jager, H. A note on the vanishing of power sums. Ann. Univ. Sci. Budapest. Eötvös Sect. Math. 10 1967 13–16.

\bibitem{KM1} Kolountzakis, M. and Matolcsi, M., Tiles with no spectra. Forum Math. 18 (2006), no. 3, 519-528.

\bibitem{L1} Laba, I., Fuglede's conjecture for a union of two intervals. Proc. Amer. Math. Soc. 129 (2001), no. 10, 2965-2972.

\bibitem{L2} Laba, I., The spectral set conjecture and multiplicative properties of roots of polynomials. J. London Math. Soc. (2) 65 (2002), no. 3, 661-671.

\bibitem{LW1} Lagarias, J. C. and Wang, Y., Tiling the line with translates of one tile. Invent. Math. 124 (1996), no. 1-3, 341-365.

\bibitem{M} Matolcsi, M., Fuglede's conjecture fails in dimension 4. Proc. Amer. Math. Soc. 133 (2005), no. 10, 3021-3026.

\bibitem{T} Tao, T., Fuglede's conjecture is false in 5 and higher dimensions. Math. Res. Lett. 11 (2004), no. 2-3, 251-258.

\bibitem {Tblog} Tao, T., https://terrytao.wordpress.com/2007/05/25/open-question-effective-skolem-mahler-lech-theorem/.

\bibitem{Tij} Tijdeman, R. On a conjecture of Tur{\'a}n and Erd{\"o}s. Nederl. Akad. Wetensch. Proc. Ser. A 69=Indag. Math. 28 1966 374–383.

\end{thebibliography}
\end{document}